\newtheorem{thm}{Theorem}
\newtheorem{lem}[thm]{Lemma}
\newtheorem{defi}{Definition}
\begin{document}

\title{A note on algebraic potentials and Morales-Ramis theory
}

\titlerunning{A note on algebraic potentials and Morales-Ramis theory}        

\author{Thierry Combot}


\institute{T. Combot \at
              IMCCE 77 Avenue Denfert Rochereau 75014 PARIS\\
              \email{combot@imcce.fr} 
}

\date{Received: date / Accepted: date}

\maketitle

\begin{abstract}
We present various properties of algebraic potentials, and then prove that some Morales-Ramis theorems readily apply for such potentials even if they are not in general meromorphic potentials. This allows in particular to precise some non-integrability proofs in celestial mechanics, where the mutual distances between the bodies appear in the potentials, and thus making this analysis unavoidable.
\keywords{Non-integrability\and homogeneous potential \and differential Galois theory}
 \subclass{MSC 37J30 \and MSC 70F15 }
\end{abstract}

\section{Introduction}\label{intro}

The purpose of this note is to apply the following Theorem for proving ``meromorphic'' non-integrability of algebraic potentials

\begin{thm}\label{thm:Morales0} (\cite{23} Theorem 2.) Let us consider a symplectic analytical complex manifold $M$ of dimension $2n$, with the Poisson bracket defined by the symplectic form, $H$ a Hamiltonian analytic on $M$ and $\Gamma \subset M$ a particular (not a point) orbit. If $H$ possesses a complete system of first integrals in involution, functionally independent and meromorphic on a neigbourhood of $\Gamma$, then the identity component of the Galois group of variational equations is abelian at any order.
\end{thm}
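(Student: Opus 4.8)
The plan is to reduce the claim to a purely algebraic statement about the Galois groups of a tower of linear differential systems, and then to exploit the involutivity of the first integrals. First I would fix a parametrization of the orbit $\Gamma$ and, in local analytic symplectic coordinates adapted to $\Gamma$, write down the variational equations $VE_k$ of every order $k \ge 1$ along $\Gamma$. A key point is that each $VE_k$, once set up on the appropriate jet space, is a \emph{linear} differential system, defined over the differential field of meromorphic functions on $\Gamma$ (or its natural algebraic completion); let $G_k$ denote its differential Galois group. These assemble into an inverse tower $G_1 \leftarrow G_2 \leftarrow \dots$ with dominant projections, and the goal becomes to show that each identity component $G_k^{0}$ is abelian.

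Next I would propagate the first integrals. Because $H = f_1, f_2, \dots, f_n$ are meromorphic in a neighbourhood of $\Gamma$, functionally independent, and in involution, I would expand each $f_i - f_i|_{\Gamma}$ in its Taylor jet transverse to $\Gamma$. Collecting homogeneous components of the appropriate weight produces, for each order $k$, a family of first integrals of $VE_k$. The two structural properties must then be shown to survive this operation: functional independence, so that one retains generically independent integrals, and involution $\{f_i, f_j\} = 0$, which descends to the Poisson bracket induced on the jet space by the prolonged symplectic form. Since the flow is Hamiltonian, $G_k$ acts preserving this prolonged symplectic structure, so $G_k^{0}$ lies in a symplectic group.

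The Galois-theoretic core is then a linear-algebra lemma. The induced first integrals are rational functions invariant under the $G_k$-action on the solution space; their Hamiltonian vector fields span, at a generic point, an isotropic distribution (Lagrangian in the first-order case) that is $G_k^{0}$-invariant and whose generators mutually commute by involutivity. One shows that a connected algebraic subgroup of a symplectic group centralizing such a maximal involutive family must have abelian Lie algebra; hence $G_k^{0}$ is abelian, as claimed. Combining over all $k$ yields the statement at any order.

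The hard part, and the step I expect to be the main obstacle, is controlling the \emph{nondegeneracy} of the induced first integrals after restriction to $\Gamma$: the transverse jets of $f_1, \dots, f_n$ may degenerate precisely on $\Gamma$, so functional independence of the derived integrals of $VE_k$ is not automatic and requires a genericity argument along the orbit. A secondary difficulty is making precise the algebraic lemma relating an invariant involutive family to commutativity of $\mathrm{Lie}(G_k^{0})$ inside the symplectic Lie algebra, and ensuring that the whole construction remains meromorphic rather than merely formal in a neighbourhood of $\Gamma$, which is exactly where the analyticity hypotheses on $M$ and $H$ enter.
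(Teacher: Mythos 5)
You should first note that the paper does not prove this statement at all: Theorem~\ref{thm:Morales0} is quoted verbatim from \cite{23} (the Morales-Ruiz--Ramis--Sim\'o theorem on higher variational equations) and is used in this note as an imported black box. So there is no in-paper proof to compare against; your proposal has to be judged as a reconstruction of the proof in \cite{23}. Your outline does follow the broad architecture of that proof --- linearize the higher variational equations on jet spaces, push the first integrals to rational invariants of the Galois groups $G_k$ via transverse Taylor jets, and conclude abelianity of $G_k^0$ from a group-theoretic lemma --- but the two steps you defer are precisely the mathematical content of the theorem, and as stated your versions of them are either missing or incorrect.

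Concretely: (i) the nondegeneracy problem you flag at the end is resolved in the literature by Ziglin's lemma, which lets one replace $f_1,\dots,f_n$ by polynomial combinations whose leading jets along $\Gamma$ remain functionally independent; without this the induced invariants of $VE_k$ could all degenerate and the argument collapses, so naming it as ``the main obstacle'' does not discharge it. (ii) Your ``linear-algebra lemma'' is misstated. The correct key lemma is: a connected algebraic subgroup of $Sp(2m,\mathbb{C})$ admitting $m$ functionally independent rational invariants in involution is abelian. This is a statement about the group \emph{leaving the functions invariant}, not about it \emph{centralizing} the isotropic distribution spanned by their Hamiltonian vector fields; a connected symplectic group can preserve a Lagrangian distribution without being abelian, so the centralizer formulation does not yield the conclusion. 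The actual proof works through the action of $\mathrm{Lie}(G_k^0)$ on the field of invariants and is not routine. (iii) For $k\geq 2$ the variational equations are not linear as written; one must pass to the linearized prolongation on jet bundles and verify that the order-$k$ jets of the $f_i$ genuinely produce rational invariants of the Galois group of that linearized system, compatibly with the projective system $G_1\leftarrow G_2\leftarrow\cdots$. Your sketch asserts this rather than establishing it. In short, the proposal is a correct roadmap of the known proof, but the load-bearing steps are absent.
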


This Theorem is an extension of Theorem 7 of \cite{5}. There is an application for homogeneous potentials in \cite{22}. In particular, this last Theorem is directly applied to celestial mechanics problems like in \cite{8}, \cite{9}, \cite{37}, \cite{31} where the potential is algebraic. Still, the Theorem requires explicitly that the potential should be meromorphic. Clearly an algebraic potential on $\mathbb{C}^n$ cannot be meromorphic on $\mathbb{C}^n$ unless it is rational. The problem is even worse than just being not regular enough because such potential is in fact multivalued (and thus is not a function, even on a small open set). In many other articles, the problem is either ignored or not suitably analyzed, as in \cite{9}, \cite{32}, \cite{25}. This makes these proofs ambiguous as both the dynamical system and the notion of meromorphic integrability are not well defined.

Still Theorem \ref{thm:Morales0} could be used if we consider such algebraic potential as a rational function on an algebraic complex manifold $M$ instead of $\mathbb{C}^n$. The aim of this note is to give a clear definition of an algebraic potential, the associated dynamical system and what is a ``meromorphic first integral'' in this case. This will give some precisions on several non-integrability proofs in celestial mechanics, in particular Theorem 10 of \cite{45}. A typical example is the following
$$V(q_1,q_2)=( q_1^2+q_2^2)^{3/2}$$
On $\mathbb{C}^2$, this expression is not meromorphic, as it is not even single-valued. The main idea to circumvent such a problem is to introduce algebraic extensions
\begin{equation}\label{example}
V(q_1,q_2,w_1)=w_1^3 \qquad w_1^2-q_1^2-q_2^2=0
\end{equation}
and then to see the function $V$ as a function well defined on the $2$-dimensional algebraic variety $\{(q_1,q_2,w_1)\in\mathbb{C}^3,\;\; w_1^2-q_1^2-q_2^2=0\}$. Let us now present more general statements.

\bigskip

We consider polynomials $G_1,\dots,G_s\in\mathbb{C}[q_1,\dots,q_n,w_1,\dots,w_s]$ and the ideal $I=<\! G_1,\dots,G_s\!>$. In the following, we will assume that $I$ is a prime ideal and that the matrix
$$J\in M_s(\mathbb{C}[q,w]) \qquad J_{i,j}=\frac{\partial G_i}{\partial w_j},\quad i,j=1\dots s$$
has a non-zero determinant modulo the ideal $I$. We define the associated manifold $\mathcal{S}=I^{-1}(0)$ and $\pi:\mathcal{S} \longrightarrow \mathbb{C}^n$ the projection on variables $q$.

\bigskip

A holomorphic function on a non empty open set $U\subset \mathcal{S}$ is by definition, locally the restriction of holomorphic functions on open sets $W\subset \mathbb{C}^{n+s}$ to $U$. A meromorphic function on $U$ is locally a quotient $h/k$ of two holomorphic functions $h,k$, with $k$ non identically zero.


Let us now define derivations on $\mathcal{S}$. We first introduce the set
$$\Sigma(I)=\{(q,w)\in\mathcal{S},\;\; \hbox{det}(J)(q,w)=0\}$$
This set will be called the critical set and corresponds to points on $S$ where the Jacobian matrix $J$ of the application $w\longrightarrow (G_1,\dots,G_s)$ is not invertible. In example \eqref{example}, we have in particular $\Sigma(I)=\{w_1=0,\;q_1\pm i q_2=0\}$. Remark that this set is at least of codimension one because the determinant is not zero modulo $I$. The manifold $\mathcal{S}$ is of dimension $n$, as it is the common zero of $s$ functionally independent ($\hbox{det}(J)\neq 0$) polynomials in dimension $n+s$.

Let $U$ be a non empty open set of $\mathcal{S}$ and $f$ a meromorphic function on $U$. We may now define
\begin{align}\label{deriv}
\frac{\partial f}{\partial q_k}= \partial_k f-
\left( \partial_{n+1} f,\dots,\partial_{n+s} f\right)J^{-1} \left(\partial_k G_1,\dots,\partial_k G_s\right)^\intercal
\end{align}
where $\partial_i$ denotes the derivative according to the $i$-th variable (the variables are $q_1,\dots,q_n,w_1,\dots,w_s$ in this order). These derivatives are well defined outside $\Sigma(I)$. We define moreover the critical set of $V$
$$\Sigma(V)=\{(q,w)\in U,\;\; V(q,w)\notin \mathbb{C} \} \cup (\Sigma(I)\cap U)$$

\begin{defi}
A meromorphic potential $V$ on an open set $U\subset \mathcal{S}$ defines the following dynamical system on $\mathbb{C}^n\times \left(U \setminus \Sigma(V)\right)$
\begin{equation}\label{eqpot}
\dot{q}_i=p_i,\quad\; \dot{p}_i=-\frac{\partial V}{\partial q_i},\quad i=1\dots n\qquad\quad  \dot{w}_i=\sum\limits_{j=1}^s p_j \frac{\partial w_i}{\partial q_j},\quad i=1\dots s
\end{equation}
\end{defi}

Let us remark now that an algebraic potential fits this definition. Consider an algebraic function $V$ on $\mathbb{C}^n$ and $P\in\mathbb{C}[q_1,\dots,q_n][w_1]$ a non-zero irreducible polynomial such that $P(V(q))=0$. The ideal $I=<\!P\!>$ on $\mathbb{C}[q_1,\dots,q_n,w_1]$ is prime because $P$ is irreducible. The matrix $J$ is $1\times 1$ and its determinant is $\partial_{w_1} P$. As $P$ is a non-zero irreducible polynomial, we have $\partial_w P\neq 0 \hbox{ mod } I$ otherwise $P$ divides $\partial_w P$ which is impossible because $\partial_w P$ is non-zero and of degree lower than $P$. Thus $\mathcal{S}=I^{-1}(0)$ is a manifold of dimension $n$, and $V$ is a meromorphic potential on $\mathcal{S}$, with $V(q,w)=w$. Recall that a meromorphic function on a complex algebraic manifold $\mathcal{S}$ is not strictly speaking a function, as it has singularities, and even indeterminate points, as for example
$$V(x,y)=\frac{xy}{x^2+y^2}$$
which is indeterminate at $(0,0)$, but still is meromorphic on $\mathbb{C}^2$ (and even rational).

\medskip

Given a potential $V$ on $\mathcal{S}$, the corresponding multivalued potential on $\mathbb{C}^n$ is given by $V(\pi^{-1}(q))$. The critical set $\Sigma(I)$ contains all ramification points of the multivalued expression $V(\pi^{-1}(q))$, and the critical set $\Sigma(V)$ contains all ramification/singular/indeterminate points of $V(\pi^{-1}(q))$. Thus defining the derivability in respect to the $q_i$ as in \eqref{deriv}, we find that
$$\Sigma(V)= \{ (q,w)\in \mathcal{S},\;\; V \hbox{ is not } C^\infty \hbox{ at } (q,w)\}$$
We can now apply Theorem \ref{thm:Morales0} to such potentials.

\section{A Morales-Ramis-Simo Theorem for algebraic potentials}

\begin{thm}\label{thmmorales}
Let $V$ be a meromorphic potential on an open set $U\subset \mathcal{S}$ and $\Gamma\subset \mathbb{C}^n\times U$ a non-stationary orbit of $V$. Suppose $\Gamma\not\subset \mathbb{C}^n\times\Sigma(V)$. If there are $n$ first integrals meromorphic on $\mathbb{C}^n\times (U \setminus \Sigma(V))$ of $V$ that are in involution and functionally independent over an open neighbourhood of $\Gamma$, then the identity component of Galois group of the variational equation near $\Gamma$ is abelian over the base field of meromorphic functions on $\Gamma\setminus (\mathbb{C}^n\times\Sigma(V))$.
\end{thm}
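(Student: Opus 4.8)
The plan is to realize the system \eqref{eqpot} as a genuine Hamiltonian system on an analytic symplectic manifold, so that Theorem~\ref{thm:Morales0} applies verbatim; the whole point is that the multivaluedness of $V(\pi^{-1}(q))$ on $\mathbb{C}^n$ disappears once one works on $\mathcal{S}$. First I would take $M:=\mathbb{C}^n\times(U\setminus\Sigma(V))$ as phase space. Because $\Sigma(I)\subset\Sigma(V)$, on $U\setminus\Sigma(V)$ one has $\det(J)\neq 0$, so the implicit function theorem makes $\pi$ a local biholomorphism and exhibits $\mathcal{S}$ as a smooth $n$-dimensional complex manifold there, with $w$ locally a holomorphic branch $w=w(q)$. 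Hence $M$ is a smooth analytic complex manifold of dimension $2n$ on which the restrictions $q_1,\dots,q_n$ of the ambient coordinates, together with the momenta $p_1,\dots,p_n$, serve as local coordinates.

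Next I would put $\omega=\sum_{i=1}^n dq_i\wedge dp_i$ on $M$. This is globally well defined and closed since the $q_i,p_i$ are restrictions of ambient coordinate functions, and it is non-degenerate because $\pi$ being a local biholomorphism off $\Sigma(I)$ forces $dq_1|_{\mathcal{S}},\dots,dq_n|_{\mathcal{S}}$ to be independent; thus $\omega$ is an analytic symplectic form. Taking $H=\tfrac{1}{2}\sum_{i=1}^n p_i^2+V$, the identification $\Sigma(V)=\{V\text{ not }C^\infty\}$ obtained just before the theorem shows $H$ is analytic on $M$. The crux is to verify that the Hamiltonian field $X_H$ given by $\iota_{X_H}\omega=dH$ reproduces \eqref{eqpot}. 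The relations $\dot q_i=p_i$ and $\dot p_i=-\partial V/\partial q_i$ drop out once one checks that the operator \eqref{deriv} is exactly the chain-rule derivative of $q\mapsto V(q,w(q))$: differentiating $G_i(q,w(q))=0$ gives $(\partial_k w_1,\dots,\partial_k w_s)^\intercal=-J^{-1}(\partial_k G_1,\dots,\partial_k G_s)^\intercal$, which is precisely the correction term in \eqref{deriv}. The evolution $\dot w_i=\sum_j p_j\,\partial w_i/\partial q_j$ is then forced by $w=w(q)$ and $\dot q_j=p_j$.

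With $(M,\omega,H)$ established, the hypothesis $\Gamma\not\subset\mathbb{C}^n\times\Sigma(V)$ guarantees that $\Gamma\setminus(\mathbb{C}^n\times\Sigma(V))$ is a nonempty (open dense) subset of $\Gamma$ lying in $M$, on which $\Gamma$ is a non-stationary integral curve of $X_H$. The $n$ given first integrals are meromorphic on $M$, in involution for the Poisson bracket of $\omega$, and functionally independent near $\Gamma$, hence form a complete system of meromorphic first integrals in involution in the sense of Theorem~\ref{thm:Morales0}. Applying that theorem to $(M,\omega,H,\Gamma)$ yields that the identity component of the Galois group of the variational equations along $\Gamma$ is abelian at every order. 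The coefficients of these variational equations are restrictions to $\Gamma$ of meromorphic data on $M$ (second derivatives of $V$, entries of $J^{-1}$, and so on), so the relevant differential base field is the field of meromorphic functions on $\Gamma\setminus(\mathbb{C}^n\times\Sigma(V))$; over this field the identity component is abelian, which is precisely the stated conclusion.

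The step I expect to be the main obstacle is the second one: confirming rigorously that \eqref{eqpot} is the Hamiltonian system of $(M,\omega,H)$ rather than merely formally resembling it. Concretely, one must show that the elimination of $w$ encoded in \eqref{deriv} coincides with differentiation along the leaves $w=w(q)$, so that in the coordinates $(q,p)$ the $q$-component of $dH$ is the intrinsic gradient of $V$ on $\mathcal{S}$; this is where the prime-ideal and non-degenerate-Jacobian hypotheses are genuinely used, and where one must ensure the multivaluedness on $\mathbb{C}^n$ has been fully absorbed into the single-valued geometry of $\mathcal{S}$.
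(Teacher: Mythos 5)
Your proposal is correct and follows essentially the same route as the paper: delete $\mathbb{C}^n\times\Sigma(V)$ (the paper works on a neighbourhood $M$ of $\Gamma'=\Gamma\setminus(\mathbb{C}^n\times\Sigma(V))$ rather than all of $\mathbb{C}^n\times(U\setminus\Sigma(V))$, an immaterial difference), equip the result with the canonical form $\sum dq_i\wedge dp_i$ using the derivations \eqref{deriv}, note that $H$ is holomorphic there since $M\cap\Sigma(V)=\emptyset$, and invoke Theorem~\ref{thm:Morales0}. Your extra verification that \eqref{deriv} is the chain-rule derivative along $w=w(q)$, so that \eqref{eqpot} really is the Hamiltonian flow of $(M,\omega,H)$, is a correct elaboration of a step the paper leaves implicit.
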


\begin{proof}
One just needs to check that hypotheses of Theorem \ref{thm:Morales0} are satisfied. We define $W=\Gamma\cap (\mathbb{C}^n\times\Sigma(V))$. These points $W$ are singularities of the vector field \eqref{eqpot}. Let us remove these points by posing $\Gamma'=\Gamma\setminus W$. Remark that as the curve $\Gamma$ is not contained in $\mathbb{C}^n\times\Sigma(V)$, $\Gamma'$ is still a curve (so a non-stationary orbit). We now consider an open neighborhood $M\subset \mathbb{C}^n\times U$ of $\Gamma'$ such that $M\cap \Sigma(V)=\emptyset$. So $M$ is a complex manifold of dimension $2n$. We endow this manifold with the canonical symplectic structure in $p,q$, where the derivations in $q$ are defined as in equation \eqref{deriv}. This symplectic structure degenerates on $\Sigma(I)$, but we do not care as $M\cap \Sigma(I)=\emptyset$. Knowing that $M\cap \Sigma(V)=\emptyset$, we also know that the corresponding Hamiltonian
$$H(p,q,w)=\frac{1}{2} \sum\limits_{i=1}^n p_i^2 +V(q,w)$$
has no singularities on $M$, and thus is holomorphic. All hypotheses of Theorem \ref{thm:Morales0} are satisfied, and so Theorem \ref{thmmorales} follows.
\end{proof}\qed

So it is possible to readily apply Morales-Ramis Theorem for meromorphic potentials on a complex algebraic manifold. Remark that the additional hypothesis $\Gamma \not\subset \mathbb{C}^n\times\Sigma(V)$ can be important. For example, the potential
\begin{equation}\label{prob}
V(q_1,q_2,w_1)=w_1^5+q_2^2 \qquad I=<w_1^2-q_1>
\end{equation}
has a particular orbit given by $w_1(t)=0,q_1(t)=0,q_2(t)=\cos t$. We have $\Sigma(I)=\{w_1=q_1=0,q_2\in\mathbb{C}\}$. This orbit is non stationary, we could compute the variational equation, but still Theorem \ref{thmmorales} does not apply because it is included in $\Sigma(I)$.

\medskip

Let us now make some precisions about the base field on which we should compute the Galois group. In \cite{5}, it is written that the base field is the field of meromorphic functions on $\Gamma'$, but in all applications, we compute Galois groups over the base field of rational functions. In page 114 of \cite{5}, they do not ignore this difficulty and remark that in case of a Fuchsian variational equation, this will still work because these two Galois groups are equal. However, no explicit proof is given, and so let us prove the following result.

\begin{lem}\label{schle}
Let 
\begin{equation}\label{eqex}
\dot{x}=Ax \qquad  A\in M_n(\mathbb{C}(t))
\end{equation}
be a regular singular differential equation (defined in 5.1.2 p 147 of \cite{42}), $D\subset \mathbb{C}$ a discrete set and $K$ the field of meromorphic functions on $\mathbb{C}\setminus D$. The Galois group $G_1$ of equation \eqref{eqex} over the base field $K$ is equal to its Galois group $G_2$ over the base field $\mathbb{C}(t)$.
\end{lem}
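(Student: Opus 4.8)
The plan is to compute both Galois groups concretely from a single fundamental matrix and to identify each as a subgroup of $GL_n(\mathbb{C})$. Let $S\subset\mathbb{P}^1$ be the finite set of singular points of \eqref{eqex} (the poles of $A$ together with $\infty$), fix a base point $t_0\in\mathbb{C}\setminus(S\cup D)$ and a fundamental matrix $Y(t)$ holomorphic and invertible near $t_0$. Since the complement of a discrete set in $\mathbb{C}$ is connected, the constants of $K$ (functions annihilated by $d/dt$) are exactly $\mathbb{C}$, as for $\mathbb{C}(t)$. Hence $K\langle Y\rangle$ and $\mathbb{C}(t)\langle Y\rangle$ are Picard--Vessiot extensions of $K$ and $\mathbb{C}(t)$, and every $\sigma\in G_1=\mathrm{Gal}(K\langle Y\rangle/K)$ is determined by a constant matrix $M_\sigma\in GL_n(\mathbb{C})$ through $\sigma(Y)=YM_\sigma$, and similarly for $G_2$.

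First I would establish $G_1\subseteq G_2$. As $\mathbb{C}(t)\subseteq K$, any $\sigma\in G_1$ fixes $\mathbb{C}(t)$ and sends $Y\mapsto YM_\sigma$ with $M_\sigma$ constant, hence preserves $\mathbb{C}(t)\langle Y\rangle$ and restricts to an element of $G_2$ carrying the same matrix $M_\sigma$. This restriction is injective, so $G_1$ is realized as a closed subgroup of $G_2$ inside $GL_n(\mathbb{C})$.

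The core of the argument is the reverse inclusion, via monodromy. Analytic continuation of $Y$ along a loop $\gamma$ based at $t_0$ replaces $Y$ by $YM_\gamma$ for a constant matrix $M_\gamma$, and commutes with $d/dt$; let $\mathrm{Mon}\subseteq GL_n(\mathbb{C})$ be the group generated by these $M_\gamma$. The key observation is that every element of $\mathrm{Mon}$ is realized by a loop lying in $\mathbb{C}\setminus(S\cup D)$: loops around the finitely many points of $S$ already generate $\mathrm{Mon}$, loops around points of $D\setminus S$ act trivially on $Y$ since these are ordinary points of the equation, and every such loop can be chosen to avoid the discrete set $D$. Along a loop contained in $\mathbb{C}\setminus D$ every element of $K$ is single-valued, hence fixed; therefore each monodromy transformation is a differential automorphism of $K\langle Y\rangle$ fixing $K$, so $\mathrm{Mon}\subseteq G_1$, and as $G_1$ is Zariski closed, $\overline{\mathrm{Mon}}\subseteq G_1$.

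Finally I would invoke the density theorem of Schlesinger (see \cite{42}): for a regular singular equation the Galois group over $\mathbb{C}(t)$ is the Zariski closure of its monodromy group, so $G_2=\overline{\mathrm{Mon}}$. Combining the three inclusions gives $G_2=\overline{\mathrm{Mon}}\subseteq G_1\subseteq G_2$, whence $G_1=G_2$. The main obstacle is precisely this reverse inclusion, and it is here that the regular singular hypothesis is indispensable: it is exactly what guarantees $G_2=\overline{\mathrm{Mon}}$, since at an irregular singularity the Stokes matrices and the exponential torus would enlarge $G_2$ beyond the monodromy closure and the lemma would fail. A secondary point requiring care is verifying that the loops generating $\mathrm{Mon}$ can be kept inside $\mathbb{C}\setminus(S\cup D)$ even when $D$ is infinite, which is what allows the (possibly very large) field $K$ to be fixed by all of $\mathrm{Mon}$.
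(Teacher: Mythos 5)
Your proof is correct and follows essentially the same route as the paper's: establish $G_1\subseteq G_2$ from the inclusion of base fields, show the monodromy group lies in $G_1$ by realizing its generators as loops avoiding the discrete set $D$ (so that elements of $K$ are single-valued, hence fixed, along them), and close the sandwich with Schlesinger's density theorem $G_2=\overline{\mathrm{Mon}}$. The only difference is presentational — you phrase the inclusions via restriction of automorphisms and a fundamental matrix, while the paper uses the ideals of invariant relations — and your explicit remarks on the constants of $K$ and on why the lemma fails at irregular singularities are welcome additions.
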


\begin{proof}
We consider the resolvant of equation \eqref{eqex} noted $x(t)$. Following Chapter 1.4 of \cite{42}, we define
\begin{equation}\begin{split}\label{inv}
Inv_1=\{P\in K[x_{1,1},\dots,x_{n,n},\left(\hbox{det}\left((x)_{i,j=1\dots n}\right)\right)^{-1}],\;\;P(t,x(t))=0\} \\
Inv_2=\{P\in \mathbb{C}(t)[x_{1,1},\dots,x_{n,n},\left(\hbox{det}\left((x)_{i,j=1\dots n}\right)\right)^{-1}],\;\;P(t,x(t))=0\}
\end{split}\end{equation}
We have then by definition
$$G_1=\{\sigma\in GL_n(\mathbb{C}), \forall P\in Inv_1,\;\;P(t,\sigma x(t))=0)\}$$
$$G_2=\{\sigma\in GL_n(\mathbb{C}), \forall P\in Inv_2,\;\;P(t,\sigma x(t))=0\}$$
We know that $Inv_2 \subset Inv_1$ and so $G_1 \subset G_2$. Let $P\in Inv_1$ and let us consider $\gamma$ a closed curve in $\mathbb{C}\setminus \left( D\cup \{z_1,\dots,z_p\}\right)$ where $z_1,\dots,z_p$ are the singularities of equation \eqref{eqex}, and $\sigma$ the corresponding monodromy element. As the coefficients of $P$ are meromorphic and univalued along the curve $\gamma$, we find that $P(t,\sigma x(t))=0$. Any curve in $\mathbb{C}\setminus \{z_1,\dots,z_p\}$ is homotopic to a curve in $\mathbb{C}\setminus \left( D\cup \{z_1,\dots,z_p\}\right)$, so noting the monodromy group $G_3$, we have $G_3 \subset G_2$.

We now use the Schlesinger density Theorem (\cite{42} Theorem 5.8 p 148). The Galois group $G_2$ is the Zariski closure of the monodromy group $G_3$. So we get
$$G_3\subset G_1 \subset G_2 \qquad\quad \overline{G}_3= G_2$$
As $G_1$ is a Zariski closed group, we finally have $G_1=G_2$.
\end{proof}\qed

Typically, when Theorem \ref{thmmorales} is used in applications, a parametrization $\phi$ of the curve is chosen, and the variational equation is computed according to this parametrization. In most examples, the variational equation obtained is with rational coefficients, regular singular, and the base field $K$ for Galois group computations is an algebraic extension of meromorphic functions on $\mathbb{C}\setminus D$ where $D$ is discrete (and $\phi(D)$ corresponds to singular points of $V$ on $\Gamma$). Then in this case, using Lemma \ref{schle}, the Galois group of the variational equation over $\mathbb{C}(t)$ is a finite extension of the Galois group over the base field $K$, and thus has the same identity component.

\section{Application to homogeneous potentials}

\begin{defi}
Let $V$ be a meromorphic potential on $\mathcal{S}$. We say that $V$ is homogeneous if there exists $(d_1,d_2)\in\mathbb{Z}^*\times\mathbb{Z},\;(k_1,\dots,k_s)\in\mathbb{Z}^s$ such that
\begin{align*}
\forall (q,w)\in \mathcal{S},\alpha\in\mathbb{C}^*,\quad (\alpha^{d_1} q,\alpha^{k_1} w_1,\dots,\alpha^{k_s}w_s)\in\mathcal{S},\\
V(\alpha^{d_1} q,\alpha^{k_1} w_1,\dots,\alpha^{k_s}w_s)=\alpha^{d_2} V(q,w)
\end{align*}
The homogeneity degree of $V$ is then $d_2/d_1$.
\end{defi}

\begin{thm}\label{thmmorales2} (Compare \cite{22})
Let $V$ be a homogeneous meromorphic potential on $\mathcal{S}$ of homogeneity degree $k\in\mathbb{Z}^*$ and $c\in\mathcal{S}\setminus (\{0\}\cup \Sigma(V))$ such that
$$\frac{\partial}{\partial q_i} V(c)= \pi(c)_i\quad i=1\dots n$$
Suppose that $\nabla^2V(c)$ (the Hessian matrix according to derivations in $q$) is diagonalizable. If $V$ has $n$ meromorphic first integrals on $\mathbb{C}^n\times (\mathcal{S}\setminus \Sigma(V))$ which are in involution and functionally independent, then for any $\lambda\in \hbox{Sp}(\nabla^2 V(c))$, the couple $(k,\lambda)$ belongs to the table
\begin{center}\begin{tabular}{|c|c|c|c|}
\hline
$k$&$\lambda$&$k$&$\lambda$\\\hline
$\mathbb{Z}^*$&$\frac{1}{2}\,i \left( ik+k-2 \right)$&$-3$&$\frac{25}{24}-\frac{1}{24}(\frac{6}{5}+6 i)^2$ \\\hline
$\mathbb{Z}^*$&$\frac{1}{2}\left( ik+k-1 \right)  \left( ik+1 \right)/k$&$-3$&$\frac{25}{24}-\frac{1}{24}(\frac{12}{5}+6 i)^2$ \\\hline
$2$&$\mathbb{C}$&$3$&$-\frac{1}{24}+\frac{1}{24}(2+6 i)^2$ \\\hline
$-2$&$\mathbb{C}$&$3$&$-\frac{1}{24}+\frac{1}{24}(\frac{3}{2}+6 i)^2$ \\\hline
$-5$&$\frac{49}{40}-\frac{1}{40}(\frac{10}{3}+10 i)^2$&$3$&$-\frac{1}{24}+\frac{1}{24}(\frac{6}{5}+6 i)^2$ \\\hline
$-5$&$\frac{49}{40}-\frac{1}{40}(4+10 i)^2$&$3$&$-\frac{1}{24}+\frac{1}{24}(\frac{12}{5}+6 i)^2$ \\\hline
$-4$&$\frac{9}{8}-\frac{1}{4}(\frac{4}{3}+4i)^2$&$4$&$-\frac{1}{8}+\frac{1}{8}(\frac{4}{3}+4 i)^2$ \\\hline
$-3$&$\frac{25}{24}-\frac{1}{24}(2+6 i)^2$&$5$&$-\frac{9}{40}+\frac{1}{40}(\frac{10}{3}+10 i)^2$ \\\hline
$-3$&$\frac{25}{24}-\frac{1}{24}(\frac{3}{2}+6 i)^2$&$5$&$-\frac{9}{40}+\frac{1}{40}(4+10 i)^2$ \\\hline
\end{tabular}\end{center}
\end{thm}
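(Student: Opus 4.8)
The plan is to reduce the statement to the classical Morales-Ramis-Sim\'o analysis of homogeneous potentials by exhibiting a homothetic particular orbit on $\mathcal{S}$ and then invoking Theorem \ref{thmmorales}. First I would use the homogeneity of $V$ to build a particular solution of \eqref{eqpot}. Writing the scaling parameter as $\alpha=\phi(t)^{1/d_1}$, the homogeneity definition guarantees that the point $(\phi(t)\,\pi(c),\phi(t)^{k_1/d_1}w_1(c),\dots,\phi(t)^{k_s/d_1}w_s(c))$ stays on $\mathcal{S}$ for all $t$. I would then set $q(t)=\phi(t)\,\pi(c)$ and $p(t)=\dot\phi(t)\,\pi(c)$, with $w(t)$ given by the same scaling, where $\phi$ solves the scalar equation $\ddot\phi=-\phi^{k-1}$. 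Since $\partial V/\partial q_i$ is homogeneous of degree $k-1$ along $\mathcal{S}$, the Darboux condition $\partial V/\partial q_i(c)=\pi(c)_i$ makes this a genuine orbit $\Gamma$ of \eqref{eqpot}. Because $c\notin\{0\}\cup\Sigma(V)$ and $\Sigma(V)$ is invariant under the scaling, $\Gamma$ meets $\mathbb{C}^n\times(U\setminus\Sigma(V))$ and is not contained in $\mathbb{C}^n\times\Sigma(V)$; it is non-stationary since $c\neq 0$. Hence the hypotheses of Theorem \ref{thmmorales} are met.

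Next I would compute the variational equation along $\Gamma$. Using the derivations \eqref{deriv}, which away from $\Sigma(I)$ agree with ordinary directional derivatives on $\mathcal{S}$, the second variation of $V$ in the $q$-directions along $\Gamma$ equals $\phi(t)^{k-2}\,\nabla^2 V(c)$, exactly as in the meromorphic case on $\mathbb{C}^n$. Since $\nabla^2 V(c)$ is diagonalizable by hypothesis, I would diagonalize it and decouple the normal variational equation into scalar second-order equations $\ddot{x}=-\lambda\,\phi(t)^{k-2}x$, one for each $\lambda\in\mathrm{Sp}(\nabla^2 V(c))$. Euler's identity for the degree-$(k-1)$ gradient gives $\nabla^2 V(c)\,\pi(c)=(k-1)\pi(c)$, so the tangential direction contributes the eigenvalue $\lambda=k-1$, which already appears at $i=1$ in the first family of the table and imposes no new constraint.

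Then, using the energy first integral $\tfrac12\dot\phi^2+\tfrac1k\phi^k=\mathrm{const}$, I would perform the classical change of independent variable that turns each scalar equation into a Gauss hypergeometric equation whose parameters depend only on $(k,\lambda)$. By Theorem \ref{thmmorales}, the existence of $n$ independent first integrals in involution forces the identity component of the Galois group of the variational equation, and hence of each hypergeometric factor, to be abelian; by Lemma \ref{schle} this may be tested over $\mathbb{C}(t)$ after choosing the parametrization $\phi$.

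Finally I would invoke the classification of hypergeometric equations with abelian identity component (Kimura's list, refined to the abelian case as in \cite{22}). Imposing this condition for each factor leaves precisely the pairs $(k,\lambda)$ recorded in the table. I expect the first step to be the main obstacle: one must check carefully that the homothetic orbit genuinely lies on $\mathcal{S}$ and stays off $\Sigma(V)$, so that the algebraic-manifold version of Morales-Ramis (Theorem \ref{thmmorales}) is applicable. Once this geometric point is secured, the variational computation and the appeal to Kimura's table proceed exactly as in the classical meromorphic setting.
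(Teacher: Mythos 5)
Your proposal follows essentially the same route as the paper: a homothetic Darboux orbit on $\mathcal{S}$, projection of the variational equation onto the $(p,q)$-variables to obtain $\ddot{X}=-\phi^{k-2}\nabla^2V(c)X$, reduction to hypergeometric equations by the classical change of variable, Lemma \ref{schle} to pass from the field of meromorphic functions on the curve to the rational base field, and Kimura's table. The only notable difference is cosmetic: the paper parametrizes the orbit as $q(t)=\phi(t)^{d_1}\pi(c)$, $w_j(t)=c_{n+j}\phi(t)^{k_j}$ so that all exponents are integers, which avoids the potentially multivalued fractional powers $\phi(t)^{k_j/d_1}$ appearing in your scaling.
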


\begin{proof}
We want to use Theorem \ref{thmmorales}. As $V$ is homogeneous, there exists $(d_1,d_2)\in\mathbb{Z}^*\times\mathbb{Z},\;(k_1,\dots,k_p)\in\mathbb{Z}^s$ such that
$$V(\alpha^{d_1} q,\alpha^{k_1} w_1,\dots,\alpha^{k_s}w_s)=\alpha^{d_2} V(q,w)$$
We note $k=d_2/d_1$ the homogeneity degree of $V$. We now consider the curve $\Gamma\subset \mathcal{S}$ given by
\begin{align*}
q(t)=\phi(t)^{d_1}.\pi(c),\;\; w(t)=(c_{n+1}\phi(t)^{k_1},\dots,c_{n+s}\phi(t)^{k_s}),\\
p(t)=d_1\dot{\phi}(t)\phi(t)^{d_1-1}.\pi(c) \qquad \textstyle{\frac{1}{2}}d_1^2\dot{\phi}^2\phi^{2d_1-2}=-\frac{d_1}{d_2}\phi^{d_2}+1
\end{align*}
This curve $\Gamma$ is an orbit of $V$. The singular set $\Sigma(V)$ is a homogeneous variety (because $V$ is homogeneous), and as $c\notin \Sigma(V)$, the points of $\Gamma\cap \Sigma(V)$ correspond to $\phi=0$.

The variational equation at first order near the curve $\Gamma$ is a linear differential equation in $X\in\mathbb{C}^{2n+s}$. At each point $(\dot{\phi},\phi)$ of $\Gamma$, the vector $X$ belongs to the tangent space of $\mathbb{C}^n\times\mathcal{S}$. Outside the singular points $\Sigma(V)$, the projection of this tangent space on the $p,q$ variables is $\mathbb{C}^{2n}$. So we can project the variational equation and get a differential equation on $\mathbb{C}^{2n}$. Noting $\nabla^2 V(c)$ the $n\times n$ Hessian matrix in respect to the derivations in $q$, the projected first order variational equation is given by
$$\ddot{X}=-\phi(t)^{d_2-2d_1} \nabla^2 V(c) X$$
We now consider the parametrization of $\Gamma$ by $\phi^{d_2}$ and thus making a variable change $z=\phi(t)^{d_2}$ in the first order variational equation. After diagonalizing the matrix $\nabla^2V(c)$, we obtain $n$ uncoupled hypergeometric equations in $z$
$$z(z-1)\frac{d^2X_i}{dz^2}+\left(\frac{3k-2}{2k}z-\frac{k-1}{k}\right) \frac{dX_i}{dz}-\frac{\lambda_i}{2k} X_i=0 \qquad \lambda_i\in\hbox{Sp}(\nabla^2 V(c))$$
The hypergeometric equation is a Fuchsian equation. The base field on which we should compute the Galois group of this equation is the field $K$ of meromorphic functions in $\phi,\dot{\phi}$ for $\phi\neq 0$, which due to the relation $\textstyle{\frac{1}{2}}d_1^2\dot{\phi}^2\phi^{2d_1-2}=-\textstyle{\frac{d_1}{d_2}}\phi^{d_2}+1$ is an algebraic extension of degree $2d_2$ of the field of meromorphic functions in $\phi^{d_2}$ for $\phi^{d_2}\neq 0$ (the parametrization we have chosen). Using Lemma \ref{schle}, the Galois group $G_2$ of the hypergeometric equation over the base field $K$ has finite index (at most $2d_2$) in the Galois group $G_1$ over the base field $\mathbb{C}(z)$.

So if $G_2$ has an abelian identity component, then it is also the case for $G_1$. The Kimura table \cite{14} gives all the cases where the Galois group $G_1$ of the hypergeometric equation has an abelian identity component, and this produces the table.
\end{proof}\qed

Theorem \ref{thmmorales2} can thus be applied to algebraic potentials, and in particular for the $n$ body problem in dimension $d\geq 2$
$$V=\sum\limits_{1\leq i<j\leq n} \frac{m_im_j}{r_{i,j}} \qquad I=\left\langle\left( r_{i,j}^2-\sum\limits_{k=1}^d (q_{i,k}-q_{j,k})^2 \right)_{1\leq i<j\leq n}\right\rangle$$
where $q_{i,\cdot}$ corresponds to the coordinates of body number $i$. The ideal $I$ is prime for $d\geq 2$ (but not for $d=1$!), and the critical set is
$$\Sigma(V)=\{(q,r)\in \mathcal{S},\;\; \exists i\neq j,\; r_{i,j}=0\}$$
We have moreover that the phenomenon of \eqref{prob} cannot appear. Indeed, all points of $\Sigma(V)$ are singularities of $V$ (and not only ramification points), so we cannot choose a ``bad'' Darboux point (a Darboux point in $\Sigma(V)$).

In the articles \cite{44}, \cite{43}, some generalized problems with other homogeneity degrees are analyzed. For the generalized $3$ body problem in \cite{44}, the authors only consider negative degrees, and so we still have that all points of $\Sigma(V)$ are singularities of $V$. In \cite{43}, such a problem could appear, but they smartly did not use forbidden orbits in their analysis. Thus the non-integrability proofs of \cite{8}, \cite{44}, \cite{9}, \cite{37}, \cite{43}, \cite{31}, \cite{45} are confirmed using the regularity class for first integrals ``meromorphic on $\mathbb{C}^n\times (\mathcal{S}\setminus \Sigma(V))$''.


\label{}
\bibliographystyle{spbasic}
\bibliography{nonzeroangular}

\end{document}